\newcommand{\1}{\mathbbm {1}}
\newcommand{\Z}{{\mathbb Z}}
\newcommand{\C}{{\mathbb C}}
\newcommand{\h}{{\mathfrak h}}
\newcommand{\wh}{{\widehat{\mathfrak h}}}
\newcommand{\I}{{\mathcal I}}
\newcommand{\tI}{\widetilde{\I}}
\newcommand{\J}{{\mathcal J}}
\def\<{\langle}
\def\>{\rangle}
\def\a{\alpha}
\def\b{\beta}
\def\g{\mathfrak g}
\def\h{\mathfrak h}
\newcommand{\mraff}{\mathrm{aff}}
\newcommand{\la}{\langle}
\newcommand{\ra}{\rangle}
\newtheorem{thm}{Theorem}[section]
\newtheorem{prop}[thm]{Proposition}
\newtheorem{lem}[thm]{Lemma}
\newtheorem{rmk}[thm]{Remark}
\begin{document}

\begin{center}
{\Large {\bf  The structure of parafermion vertex operator algebras: general case}} \\
\vspace{0.5cm}


Chongying Dong\footnote{Supported by NSF grants,
and  a Faculty research grant from  the
University of California at Santa Cruz.}
\\
Department of Mathematics, University of
California, Santa Cruz, CA 95064\\
\vspace{.1cm}
\& School of Mathematics,  Sichuan University, Chengdu, 610065 China\\
\vspace{.2 cm}
 Qing Wang\footnote{Supported by Natural Science Foundation of
Fujian Province, China (No.2009J05012.)}\\
School of Mathematical Sciences, Xiamen University,  Xiamen,
361005 China
\end{center}




\begin{abstract}
The structure of the parafermion vertex operator algebra associated to an
integrable highest weight module for any affine Kac-Moody algebra
is studied. In particular, a set of generators for this algebra
has been determined.
\end{abstract}


\section{Introduction}
\def\theequation{1.\arabic{equation}}
\setcounter{equation}{0}

This paper is a continuation of our study of the parafermion
vertex operator algebra associated to an integrable highest weight
module for an arbitrary  affine Kac-Moody algebra. We determine a
set of generators for these algebras. If the affine
Kac-Moody algebra is $A_1^{(1)}$, this result was obtained
previously in \cite{DLY3}.

The parafermion algebra was first studied in \cite{ZF} in the context
of conformal field theory.
 It was clarified in \cite{DL} that the
parafermion algebras are essentially the $Z$-algebras introduced
and studied earlier in \cite{LP,LW1,LW2} in the process of
studying the representation theory for the affine Kac-Moody Lie
algebras. As it proved in \cite{DL}, the parafermion algebras
generate certain generalized vertex operator algebras. The
partition functions for the parafermion conformal field theory
have been given in \cite{GQ} and \cite{G} in connection with the
partition functions associated to the integrable representations
for the affine Kac-Moody Lie algebras. We refer the reader to
\cite{ZF}, \cite{GQ}, \cite{G}, \cite{BEHHH}, \cite{H}, \cite{WD},
\cite{We} for various aspects of parafermion conformal field
theory.

The parafermion vertex operator algebras which have roots in the
parafermion conformal field theory are realized as commutants of the
Heisenberg vertex operator subalgebras in the vertex operator
algebras associated to the integrable highest weight modules for the
affine Kac-Moody Lie algebra \cite{GKO}, \cite{FZ}, \cite{LL}. More
precisely, let $L(k,0)$ be the level $k$ integrable highest weight
module for  affine Kac-Moody algebra $\widehat{\g}$ associated to a
finite dimensional simple Lie algebra $\g.$ Then $L(k,0)$ has a
vertex operator subalgebra $M_{\wh}(k,0)$ generated by the Cartan
subalgebra $\h$  of $\g.$  The commutant $K(\g,k)$ of $M_{\wh}(k,0)$
in $L(k,0)$ is called the parafermion vertex operator algebra.
Although the parafermion field theory has been studied for more than
two decades, the mathematical investigation of the parafermion
vertex operator algebras have been limited by a lack of
understanding of the structural theory of these algebras. The  goals
of this paper and \cite{DLY1}-\cite{DLY3} are to alleviate this
situation.

More importantly, it is widely believed that $K(\g,k)$ should give
a new class of rational, $C_2$-cofinite vertex operator algebras
although this can only been proved in the case $\g=sl_2$ and
$k\leq 6$ \cite{DLY2}.  Most well known vertex operator algebras
such as lattice vertex operator algebras \cite{B}, \cite{FLM},
\cite{D}, the affine vertex operator algebras \cite{FZ},
\cite{DL}, \cite{MP} and the Virasoro vertex operator algebras
\cite{FZ}, \cite{W} can be understood well using the underline
lattices or Lie algebras. Unfortunately, the structures of
parafermion vertex operator algebras with weight one subspaces
being zero are much more complicated. It seems that a
determination of a set of generators is the first step in
understanding parafermion vertex operator algebras and their
representation theory.

It is well known that $L(k,0)$ is the irreducible quotient of the
generalized Verma module $V(k,0)$ (see Section 2). So the structure
of $L(k,0)$ can be determined by studying the maximal submodule of
$V(k,0)$ or the maximal ideal of $V(k,0)$ which is also a vertex
operator algebra. The same idea can also been applied to the study
of parafermion vertex operator algebras. In fact, the Heisenberg
vertex operator algebra $M_{\wh}(k,0)$ is also a subalgebra of
$V(k,0)$ and the parafermion vertex operator algebra $K(\g,k)$ is
the simple quotient of the commutant $N(\g,k)$ of $M_{\wh}(k,0)$ in
$V(k,0).$ The main part of this paper is to determine a set of
generators for $V(k,0)(0)$ which is the weight zero subspace of
$V(k,0)$ under the action of Cartan subalgebra $\h.$ The generators
for $N(\g,k)$ and $K(\g,k)$ will be found easily then.  Also, the
maximal ideal of $N(\g,k)$ is generated by one vector. This result
is similar to that for the maximal ideal of $V(k,0).$

It is worthy to point out that the structure theory for the
parafermion vertex operator algebra is similar to the structure
theory for the finite dimensional Lie algebras or Kac-Moody Lie
algebras. The building block of the Kac-Moody Lie algebras is the
3-dimensional simple Lie algebra $sl_2$ associated to any real root.
The generator results for the parafermion vertex operator algebras
given in this paper and \cite{DLY3} show that the parafermion vertex
operator algebras associated to the affine Lie algebra $A_1^{(1)}$
are  also the building block of general parafermion vertex operator
algebras. We hope this fact will be important in the future study of
the representation theory for parafermion vertex operator algebra.
So a completely understanding of representation theory of $K(\g,k)$
in the case $\g=sl_2$ becomes necessary.

The paper is organized as follows. In Section 2, we give the
construction of vertex operator algebra $V(k,0)$ associated to the
affine Kac-Moody algebra $\widehat{\g}$ from \cite{FZ}. $V(k,0)$
has a vertex operator subalgebra $V(k,0)(0)$ which is the
$\h$-invariants of $V(k,0).$ A foundational and difficult result
in this section is a set of of generators for $V(k,0)(0).$ In
Section 3, we give a set of generators for the vertex operator
algebra $N(\g,k)$ which is the commutant of the Heisenberg vertex
operator algebra $M_{\wh}(k,0)$ in $V(k,0).$ We also discuss the
vertex operator subalgebra $\widehat{ P}_{\alpha}$ generated by
$\omega_{\alpha}, W_\a^3$ associated to any positive root $\alpha$
and prove that $\widehat{P}_{\alpha}$ is isomorphic to
$N(sl_2,k_\alpha)$, where
$k_\alpha=\frac{\<\theta,\theta\>}{\<\a,\a\>}k $ and $\theta$ is
the highest root. In fact, $N(\g,k)$ is generated by
$\widehat{P}_{\alpha}$ for positive roots $\alpha.$ In Section 4,
we give a set of generators for the parafermion vertex operator
algebra $K(\g,k)$ which is the simple quotient of $N(\g,k).$ We
prove that the maximal ideal of $N(\g,k)$ is generated by the
vector $x_{-\theta}(0)^{k+1}x_{\theta}(-1)^{k+1}\1.$ We also show
that the image $P_{\a}$  of $\widehat{P}_{\a}$ in $K(\g,k)$ is
isomorphic to $K(sl_2,k_{\alpha})$ and $K(\g,k)$ is generated by
$P_{\a}$ for positive roots $\alpha.$ That is,
$K(sl_2,k_{\alpha})$ are the building blocks of $K(\g,k).$

We expect the reader to be familiar with the elementary theory of vertex
operator algebras as found, for example, in \cite{FLM} and \cite{LL}
.
\section{Vertex operator algebras $V(k,0)$ and $V(k,0)(0)$}
\label{Sect:V(k,0)}
\def\theequation{2.\arabic{equation}}
\setcounter{equation}{0}

Let $\g$ be a finite dimensional simple Lie algebra with a Cartan
subalgebra $\h.$ Let $\Delta$ be the corresponding root system and
$Q$ the root lattice. Let  $\la ,\ra$ be an invariant symmetric
nondegenerate bilinear form on $\g$ such that $\<\a,\a\>=2$ if
$\alpha$ is a long root, where we have identified $\h$ with $\h^*$
via $\<,\>.$ As in \cite{H}, we denote the image of $\alpha\in
\h^*$ in $\h$ by $t_\alpha.$ That is, $\alpha(h)=\<t_\alpha,h\>$
for any $h\in\h.$ Fix simple roots $\{\alpha_1,\cdots,\alpha_l\}$
and denote the highest root by $\theta.$

Let $\g_{\alpha}$ denote the root space associated to the root
$\a\in \Delta.$ For $\alpha\in \Delta_+$, we fix $x_{\pm
\alpha}\in \g_{\pm \alpha}$ and
$h_{\alpha}=\frac{2}{\<\a,\a\>}t_\alpha\in \h$ such that
 $[x_\a,x_{-\a}]=h_{\a}, [h_\a,x_{\pm \a}]=\pm 2x_{\pm\a}.$ That
is, $\g^{\a}=\C x_{\a}+\C h_{\alpha}+\C x_{-\alpha}$ is isomorphic
to $sl_2$ by sending $x_\a$ to $\left(\begin{array}{ll} 0 & 1\\ 0 &
0\end{array}\right),$ $x_{-\a}$ to $\left(\begin{array}{ll} 0 & 0\\
1 & 0\end{array}\right)$ and $h_\a$ to $\left(\begin{array}{ll} 1 &
0\\ 0 & -1\end{array}\right).$ Then
$\<h_\a,h_\a\>=2\frac{\<\theta,\theta\>}{\<\alpha,\alpha\>}$ and
$\<x_{\a},x_{-\a}\>=\frac{\<\theta,\theta\>}{\<\alpha,\alpha\>}$
for all
$\alpha\in \Delta.$

Let $\widehat{\mathfrak g}= \g \otimes
\C[t,t^{-1}] \oplus \C K$ is the corresponding affine Lie algebra.
Let $k \ge 1$ be an integer and
\begin{equation*}
V(k,0) = V_{\widehat{\g}}(k,0) = Ind_{\g \otimes
\C[t]\oplus \C K}^{\widehat{\g}}\C
\end{equation*}
the induced $\widehat{\g}$-module such that ${\g} \otimes \C[t]$ acts as $0$ and $K$ acts as $k$ on $\1=1$.

We denote by $a(n)$ the operator on $V(k,0)$ corresponding to the action of
$a \otimes t^n$. Then
$$[a(m), b(n)] = [a,b](m+n) + m \la a,b \ra \delta_{m+n,0}k$$
for $a, b \in \g$ and $m,n\in \Z$.

Let $a(z) = \sum_{n \in \Z} a(n)z^{-n-1}$. Then $V(k,0)$ is a
vertex operator algebra generated by $a(-1)\1$ for $a\in \g$ such
that $Y(a(-1)\1,z) = a(z)$ with the vacuum vector $\1$ and the
Virasoro vector
\begin{align*}
\omega_{\mraff} &= \frac{1}{2(k+h^{\vee})} \Big(
\sum_{i=1}^{l}h_i(-1)h_i(-1)\1 +\sum_{ \alpha\in\Delta}
\frac{\<\a,\a\>}{\<\theta,\theta\>}x_{\alpha}(-1)x_{-\alpha}(-1)\1
\Big)
\end{align*}
of central charge $\frac{k\dim \g}{k+h^{\vee}}$ \cite{FZ} (also see
\cite[Section 6.2]{LL}), where $h^{\vee}$ is the dual Coxeter number
of $\g$ and $\{h_i|i=1,\cdots,l\}$ is an orthonormal basis of
$\mathfrak h.$

For $\lambda \in {\mathfrak h}^*$, set
\begin{equation*}
V(k,0)(\lambda)=\{v\in V(k,0)|h(0)v=\lambda(h) v, \forall\;
h\in\mathfrak h\}.
\end{equation*} Then we have
\begin{equation}\label{eq:V-dec}
V(k,0)=\oplus_{\lambda\in Q}V(k,0)(\lambda).
\end{equation}

Since $[h(0), Y(u,z)]=Y(h(0)u,z)$ for $h\in \mathfrak h$ and  $u
\in V(k,0)$, from the definition of affine vertex operator
algebra, we see that $V(k,0)(0)$ is a vertex operator subalgebra
of $V(k,0)$ with the same Virasoro vector $\omega_{\mraff}$ and
each $V(k,0)(\lambda)$ is a module for $V(k,0)(0)$.

Our first theorem is on a set of generators for $V(k,0)(0).$

\begin{thm}\label{generator1} The vertex operator algebra
$V(k,0)(0)$ is generated by vectors $\alpha_{i}(-1)\1$ and $x_{-\alpha}(-2)x_{\alpha}(-1)\1$ for
$1\leq i \leq l, \alpha\in\Delta_{+}$.
\end{thm}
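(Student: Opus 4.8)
The plan is to exploit the $\mathfrak{h}$-weight decomposition \eqref{eq:V-dec} together with the affine PBW basis of $V(k,0)$. The space $V(k,0)(0)$ is spanned by monomials $a_1(-n_1)\cdots a_r(-n_r)\1$ with $n_i\geq 1$ and whose total $\mathfrak h$-weight is zero; writing each $a_j$ as an element of $\mathfrak h$ or of a root space $\g_\beta$, the zero-weight condition says the multiset of roots appearing must sum to zero. I would first set up an induction on the conformal weight (the $L(0)$-eigenvalue), and within a fixed weight a secondary induction on the number of tensor factors $r$ and on the "total root content" $\sum_{j:\,a_j\in\g_{\beta_j}}\operatorname{ht}(\beta_j)_+$. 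The subalgebra $U$ generated by the proposed vectors $\alpha_i(-1)\1$ and $x_{-\alpha}(-2)x_\alpha(-1)\1$ is closed under all modes of its generators and under $L(-1)$, so it suffices to show every spanning monomial of $V(k,0)(0)$ lies in $U$.

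The key mechanism is to produce, from the listed generators, enough operators to move any zero-weight monomial into $U$. Applying modes of $\alpha_i(-1)\1$ just gives the Heisenberg operators $h(n)$, $h\in\mathfrak h$; these already handle the purely-Cartan monomials $h_{i_1}(-n_1)\cdots h_{i_s}(-n_s)\1$. For monomials involving root vectors, the crucial observation is that the commutator $[x_{-\alpha},x_\alpha]=h_\alpha\in\mathfrak h$, so from the generator $v_\alpha:=x_{-\alpha}(-2)x_\alpha(-1)\1$ one can extract, via the vertex-algebra identities (iterate formula / associativity), operators built from $x_{-\alpha}(m)x_\alpha(n)$ for various $m,n$, and these together with their commutators with the $h(n)$'s generate a rich family. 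The standard device is: for each positive root $\alpha$, the two vectors $\alpha(-1)\1$ (equivalently $h_\alpha(-1)\1$) and $W_\alpha^3$-type combinations sit inside the $\widehat{sl}_2$-type picture described in the introduction, and one reduces a general zero-weight monomial, root pair by root pair, to such two-letter blocks. Concretely, given a monomial containing factors $x_\alpha(-n)$ and $x_{-\alpha}(-m)$ (which must occur in balanced fashion for the weight to vanish), I would use commutation with Heisenberg modes and the affine Serre-type relations to rewrite products like $x_{-\beta}(-m)x_{\beta}(-n)\1$ with $m\neq 1$ or $n\neq 2$ in terms of $v_\beta$, its $L(-1)$-descendants, and lower-order terms already in $U$ by the induction hypothesis; the mismatch $x_{-\alpha}(-1)x_\alpha(-1)\1$ versus $x_{-\alpha}(-2)x_\alpha(-1)\1$ is absorbed using $[x_{-\alpha}(-2)x_\alpha(-1)\1]$ acted on by suitable modes plus the bracket $[x_\alpha(-1),x_{-\alpha}(-1)]=h_\alpha(-2)+\cdots$.

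The main obstacle, and the bulk of the work, is the reduction step for monomials with three or more root-vector factors whose roots sum to zero but which do not pair up as $\pm\alpha$: e.g. $x_\alpha(-i)x_\beta(-j)x_{-\alpha-\beta}(-m)\1$. Here one must use the bracket $[x_\alpha,x_\beta]\in\g_{\alpha+\beta}$ inside the affine relations to collapse two of the three factors into a single root vector of root $\alpha+\beta$, trading the triple for a pair $x_{\alpha+\beta}(\cdot)x_{-\alpha-\beta}(\cdot)$ plus correction terms of strictly smaller total root content, which are in $U$ by induction — but one must check carefully that the structure constants $N_{\alpha,\beta}$ are nonzero whenever needed (they are, since $\alpha+\beta\in\Delta$) and that the conformal weight and root-content bookkeeping genuinely decreases. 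A secondary subtlety is the base case of the induction and ensuring that the recursion bottoms out at the two explicitly listed generator types rather than at some third family; this is where one invokes the $\widehat{sl}_2$ result of \cite{DLY3} applied to each $\g^\alpha$, guaranteeing that the two-letter zero-weight subalgebra attached to a single root $\alpha$ is generated by $\alpha(-1)\1$ and $x_{-\alpha}(-2)x_\alpha(-1)\1$ alone. I would organize the final argument as a double induction with these two reductions — "lower the number of distinct roots" and "lower the conformal weight of a balanced pair" — each step landing in the span of $U$ plus strictly simpler monomials.
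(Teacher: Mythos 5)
Your scaffolding --- induction on the number $t$ of root-vector factors, the base case $t=2$ supplied by the $\widehat{sl}_2$ result of \cite{DLY3}, and the use of the iterate formula on modes of $x_{-\alpha}(-2)x_{\alpha}(-1)\1$ --- matches the paper's. The gap is in your central reduction step. You propose to handle a monomial such as $x_{\alpha}(-i)x_{\beta}(-j)x_{-\alpha-\beta}(-m)\1$ by ``collapsing'' two factors via $[x_\alpha,x_\beta]=N_{\alpha,\beta}x_{\alpha+\beta}$. But the affine relation only gives
\[
x_{\alpha}(-i)x_{\beta}(-j)=x_{\beta}(-j)x_{\alpha}(-i)+N_{\alpha,\beta}\,x_{\alpha+\beta}(-i-j),
\]
i.e.\ a reordering: the leading term still has $t$ factors, the same conformal weight, and the same root content as the monomial you started with, so neither of your induction parameters strictly decreases and the recursion never closes. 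The correction term with $t-1$ factors is the one covered by your induction hypothesis, but that tells you nothing about the $t$-factor monomial you are actually trying to reach.

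The argument has to run in the opposite direction: start from the shorter vector $w=x_{\beta_1+\beta_2}(-n_2)x_{\beta_3}(-n_3)\cdots x_{\beta_t}(-n_t)\1$, which lies in $U$ by the induction hypothesis, and apply the modes $(x_{\beta_1}(-m)x_{-\beta_1}(-n)\1)_s$, which preserve $U$ because the two-letter vector is in $U$ by the base case. Expanding by the iterate formula, one term is a nonzero multiple of the desired $t$-factor monomial (a mode of $x_{-\beta_1}$ splits $x_{\beta_1+\beta_2}$ into $x_{\beta_2}$), but there remain finitely many unwanted cross terms coming from $x_{\beta_1}(j)$ hitting the other factors. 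Showing that each of these individually lies in $U$ is the real work: one replaces $m$ by $m+r+1$ for $r=0,\dots,N-1$ and observes that the resulting linear system has nondegenerate coefficient matrix $\bigl((-1)^{r+j}\binom{-m-r-1}{j}\bigr)_{r,j}$. This Vandermonde-type step is absent from your outline, and without it the cross terms cannot be discarded. You also omit the case in which no two of the $\beta_i$ sum to a root; there one must first prove that some pair satisfies $\beta_i+\beta_j=0$ (otherwise $\langle\beta_1,\sum_{j\geq 2}\beta_j\rangle\geq 0$, contradicting $\sum_{j\geq 2}\beta_j=-\beta_1$), after which the same operator argument applies with all cross terms vanishing.
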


\begin{proof} First note that $V(k,0)(0)$ is spanned
by the vectors
$$a_1(-m_1)\cdots a_s(-m_s)x_{\beta_{1}}(-n_{1})x_{\beta_{2}}(-n_{2})\cdots
x_{\beta_{t}}(-n_{t})\1$$
where $a_i\in \mathfrak h, \beta_j\in\Delta, m_i>0, n_j>0$
and $\beta_{1}+\beta_{2}+\cdots+\beta_{t}=0.$
Let $U$ be the vertex operator subalgebra generated by
$\alpha_{i}(-1)\1$ and $x_{-\alpha}(-2)x_{\alpha}(-1)\1$
for $1\leq i \leq l, \alpha\in\Delta_{+}$. Clearly,
$\alpha_{i}(-1)\1$ and $x_{-\alpha}(-2)x_{\alpha}(-1)\1\in V(k,0)(0)$ for
$1\leq i \leq l, \alpha\in\Delta_{+}$. It suffices to prove that
$V(k,0)(0)\subset U.$

Since $(h(-1)\1)_n = h(n)$ for $h\in \mathfrak h $, we see that
$h(n)U \subset U$ for $h\in \mathfrak h$ and $n\in\Z.$ So we only
need to prove $u=x_{\beta_{1}}(-n_{1})x_{\beta_{2}}(-n_{2})\cdots
x_{\beta_{t}}(-n_{t})\1\in U$ with
$\beta_{1}+\beta_{2}+\cdots+\beta_{t}=0$.  We will prove by
induction on $t$.

Clearly, $t\geq 2.$ If $t=2,$ it follows from Theorem 2.1 in
[DLWY] that $x_{-\alpha}(-m)x_{\alpha}(-n)\1 \in U$ for $m,n>0$.
From now on, we assume that $t>2$ and that
$$x_{\beta_{1}}(-n_{1})x_{\beta_{2}}(-n_{2})\cdots
x_{\beta_{\nu}}(-n_{\nu})\1\in U$$ with
$\beta_{1}+\beta_{2}+\cdots+\beta_{\nu}=0$ for $2\leq \nu \leq
t-1$ and $n_i>0.$ We have to show that
$$x_{\beta_{1}}(-n_{1})x_{\beta_{2}}(-n_{2})\cdots
x_{\beta_{t}}(-n_{t})\1\in U$$ with
$\beta_{1}+\beta_{2}+\cdots+\beta_{t}=0$. We divide the proof into
two cases.

{\bf Case 1.} There exist $1\leq i,j\leq t$ such that
$\beta_{i}+\beta_{j}\in \Delta.$ We first prove the result for $n_2=1.$ Note
that if
 $$x_{\beta_{1}}(-n_{1})x_{\beta_{2}}(-n_{2})\cdots
x_{\beta_{t}}(-n_{t})\1\in U$$
 then
$$x_{\beta_{i_1}}(-n_{i_1})x_{\beta_{i_2}}(-n_{i_2})\cdots
x_{\beta_{i_t}}(-n_{i_t})\1\in U$$ by the induction assumption,
where $(i_1,...,i_t)$ is any permutation of $(1,...,t).$ Without
loss of generality, we may assume that $\beta_{1}+\beta_{2}\in
\Delta$.

We need the
following identity
$$(u_nv)_m=\sum_{j\geq 0}(-1)^j \binom{n}{j} u_{n-j}v_{m+j}
-\sum_{j\geq 0}(-1)^{n+j} \binom{n}{j} v_{m+n-j}u_{j}$$
for $u,v\in V(k,0)$ and $m,n\in \Z$, which is a consequence of the
Jacobi identity of vertex operator algebra. Now, take
$u=x_{-\alpha}(-1)\1, \; v=x_{\alpha}(-n)\1$ to obtain
\begin{equation}\label{eq:fel}
\begin{split}
& \qquad \quad (x_{-\alpha}(-m)x_{\alpha}(-n)\1)_{s}\\
& \qquad = \sum_{ j \geq 0}(-1)^{j}\binom{-m}{j}x_{-\alpha}(-m-j)(x_{\alpha}(-n)\1)(s+j)\\
& \qquad \quad  -\sum_{j\geq
0}(-1)^{-m+j}\binom{-m}{j}(x_{\alpha}(-n)\1)(s-m-j)x_{-\alpha}(j).
\end{split}
\end{equation}
Since $Y(u_{-n-1}\1,z)=\frac{1}{n!}\frac{d^{n}}{dz^{n}}Y(u,z)$, we
have
\begin{equation}\label{eq:fe11}
(x_{\alpha}(-n)\1)(s+j)=(-1)^{n-1}{s+j\choose n-1}x_{\alpha}(s+j+1-n)
\end{equation}
and
\begin{equation}\label{eq:fe12}
(x_{\alpha}(-n)\1)(s-m-j)=(-1)^{n-1}{s-m-j\choose n-1}x_{\alpha}(s-m-j+1-n).
\end{equation}

Let $w=x_{\beta_{1}+\beta_{2}}(-n_{2})x_{\beta_{3}}(-n_{3})\cdots
x_{\beta_{t}}(-n_{t})\1$ with
$\beta_{1}+\beta_{2}+\cdots+\beta_{t}=0.$ By
(\ref{eq:fel})-(\ref{eq:fe12}) we have
\begin{equation}\label{eq:fe13}
\begin{split}
& \qquad \quad (x_{\beta_{1}}(-m)x_{-\beta_{1}}(-n)\1)_{s}w\\
& \qquad = \sum_{ j \geq
0}(-1)^{j+n-1}\binom{-m}{j}\binom{s+j}{n-1}x_{\beta_{1}}(-m-j)x_{-\beta_{1}}(s+j+1-n)w\\
& \qquad \quad  -\sum_{j\geq
0}(-1)^{-m+j+n-1}\binom{-m}{j}\binom{s-m-j}{n-1}x_{-\beta_{1}}(s-m-j+1-n)x_{\beta_{1}}(j)w.
\end{split}
\end{equation}

Without loss, we assume $\beta_{i}-\beta_{1}\in \Delta$ for $3\leq
i\leq p\leq t$, $\beta_{i}-\beta_{1}\notin \Delta$ for $p+1\leq
i\leq q\leq t.$
We now analyze the identity \eqref{eq:fe13} with
$-n_{2}=-1-n_{3}-\cdots-n_{p}$ and $s-n=n_{3}+\cdots+n_{p}-1.$
Consider the first summation of the right hand side of
\eqref{eq:fe13}. If $j=0$, then $s+j+1-n-1-n_{3}-\cdots-n_{p}=-1$
and $s+j+1-n-n_{i}=n_{3}+\cdots+n_{i-1}+n_{i+1}+\cdots+n_{p}\geq
0$ for $3\leq i\leq p.$ If $j>0,$ then
$s+j+1-n-1-n_{3}-\cdots-n_{p}=-1+j\geq0$, and
$s+j+1-n-n_{i}=n_{3}+\cdots+n_{i-1}+n_{i+1}+\cdots+n_{p}+j>0$ for
$3\leq i\leq p.$ Using the induction assumption, we see that
$$\sum_{ j >0}(-1)^{j+n-1}\binom{-m}{j}\binom{s+j}{n-1}x_{\beta_{1}}(-m-j)x_{-\beta_{1}}(s+j+1-n)w=u\in U.$$
Let $[x_{-\b_1},x_{\b_1+\b_2}]=\lambda x_{\b_2}$ for some nonzero
$\lambda.$ Then the identity \eqref{eq:fe13} becomes
\begin{equation}\label{eq:fe14}
\begin{split}
&  \qquad \quad (x_{\beta_{1}}(-m)x_{-\beta_{1}}(-n)\1)_{s}w\\
& \qquad
=\lambda(-1)^{n-1}\binom{s}{n-1}x_{\beta_{1}}(-m)x_{\beta_{2}}(-1)x_{\beta_{3}}(-n_{3})\cdots x_{\beta_{t}}(-n_{t})\1+u\\
 & \qquad  -\sum_{j\geq
0}(-1)^{-m+j+n-1}\binom{-m}{j}\binom{s-m-j}{n-1}(\lambda_2x_{-\beta_{1}}(s-m-j+1-n)\cdot\\
& \qquad \qquad x_{2\beta_1+\b_2}(-n_2+j)\prod_{i=3}^tx_{\beta_i}(-n_i)\1\\
&
\qquad+\lambda_{3}x_{-\beta_{1}}(s-m-j+1-n)x_{\beta_{1}+\beta_{2}}(-n_{2})x_{\beta_1+\b_3}(-n_3+j)\prod_{i=4}^tx_{\b_i}(-n_i)\1+\cdots\\
\\
& \qquad +
\!\lambda_{t}x_{-\beta_{1}}\!(\!s\!-\!m\!-\!j\!+\!1\!-\!n\!)x_{\beta_{1}\!+\!\beta_{2}}(\!-\!n_{2}\!)x_{\b_3}(-n_3)\cdots
x_{\beta_{t-1}}(-n_{t-1})x_{\b_1+\b_t}(-n_t+j)\1 )
\end{split}
\end{equation}
where constant $\lambda_i$ is determined by $[x_{\beta_1},
x_{\beta_i}]=\lambda_ix_{\beta_1+\beta_i}$ if $\b_1+\b_i\in
\Delta$, and $\lambda_i=0$ if $\b_1+\b_i\notin \Delta.$ The
$x_{\b}$ is understood to be zero if $\b$ is not a root.
 Take $N=\mbox{max}\{n_{2},\cdots,n_{t}\}.$
Then if $j\geq N,$ the corresponding term in the summation of
(\ref{eq:fe14}) lies in $U$ by the induction assumption. By
changing $u$ in (\ref{eq:fe14}), we can assume that the $j$ ranges
from $0$ to $N-1$ in (\ref{eq:fe14}).

Since $s>n-1$ we see that $\binom{s}{n-1}\ne 0.$ To prove
$$x_{\beta_{1}}(-m)x_{\beta_{2}}(-1)x_{\beta_{3}}(-n_{3})\cdots x_{\beta_{t}}(-n_{t})\1\in U,$$
it is enough to prove that each term in the summation
$\sum_{j=0}^{N-1}*$ of (\ref{eq:fe14}) lies in $U$ as the LHS of
(\ref{eq:fe14}) is always in $U.$ For this purpose, we return to
the identity \eqref{eq:fe13}. We assume
$-n_{2}=-1-n_{3}-\cdots-n_{p}$ as before. Let $r\geq 0$ be an
integer, and assume $s'-n=n_{3}+\cdots+n_{p}+r.$ Then for $j\geq
0$, we have $s'+j+1-n-1-n_{3}-\cdots-n_{p}=j+r\geq 0$, and
$s'+j+1-n-n_{i}=n_{3}+\cdots+n_{i-1}+n_{i+1}+\cdots+n_{p}+1+j+r>0$
for $3\leq i\leq p.$ After substituting $m+r+1$ for $m$ in
\eqref{eq:fe13} and $s'$ for $s,$ the identity \eqref{eq:fe13}
becomes
\begin{equation}\label{eq:fe15}
\begin{split}
& \qquad \quad (x_{\beta_{1}}(-m-r-1)x_{-\beta_{1}}(-n)\1)_{s'}w=u'\\
& \qquad
-\sum_{j=0}^{N-1}(\!-\!1\!)^{-\!m\!-\!r\!-\!1\!+\!j\!+\!n\!-\!1}\binom{-\!m\!-\!r\!-\!1}{j}\binom{s'\!-\!m\!-\!r\!-\!1\!-\!j}{n-1}(
\lambda_2x_{-\beta_{1}}(s'\!-\!m\!-\!r-\!j\!-\!n)\cdot\\
& \qquad \qquad x_{2\beta_1+\b_2}(-n_2+j)\prod_{i=3}^tx_{\beta_i}(-n_i)\1\\
&
\qquad+\lambda_{3}x_{-\beta_{1}}(s'\!-\!m\!-\!r-\!j\!-\!n)x_{\beta_{1}+\beta_{2}}(-n_{2})x_{\beta_1+\b_3}(-n_3+j)\prod_{i=4}^tx_{\b_i}(-n_i)\1+\cdots\\
\\
& \qquad +
\!\lambda_{t}x_{-\beta_{1}}\!(s'\!-\!m\!-\!r-\!j\!-\!n)x_{\beta_{1}\!+\!\beta_{2}}(\!-\!n_{2}\!)x_{\b_3}(-n_3)\cdots
x_{\beta_{t-1}}(-n_{t-1})x_{\b_1+\b_t}(-n_t+j)\1)
\end{split}
\end{equation}
for some $u^{'}\in U$ by the induction assumption.

Note that $s-m-j=s'-m-r-1-j.$ So \eqref{eq:fe15} implies to
\begin{equation}\label{eq:fe15'}
\begin{split}
& \qquad \quad (x_{\beta_{1}}(-m-r-1)x_{-\beta_{1}}(-n)\1)_{s'}w=u'\\
& \qquad -\sum_{j=0}^{N-1}(\!-\!1\!)^{-\!m\!-\!r\!-\!1\!+\!j\!+\!n\!-\!1}\binom{-\!m\!-\!r\!-\!1}{j}\binom{s-m-j}{n-1}(
\lambda_2x_{-\beta_{1}}(s-m-j+1-n)\cdot\\
& \qquad \qquad x_{2\beta_1+\b_2}(-n_2+j)\prod_{i=3}^tx_{\beta_i}(-n_i)\1\\
&
\qquad+\lambda_{3}x_{-\beta_{1}}(s-m-j+1-n)x_{\beta_{1}+\beta_{2}}(-n_{2})x_{\beta_1+\b_3}(-n_3+j)\prod_{i=4}^tx_{\b_i}(-n_i)\1+\cdots\\
\\
& \qquad +
\!\lambda_{t}x_{-\beta_{1}}\!(\!s\!-\!m\!-\!j\!+\!1\!-\!n\!)x_{\beta_{1}\!+\!\beta_{2}}(\!-\!n_{2}\!)x_{\b_3}(-n_3)\cdots
x_{\beta_{t-1}}(-n_{t-1})x_{\b_1+\b_t}(-n_t+j)\1 )
\end{split}
\end{equation}

We now take $r=0,1,2,\cdots,N-1$ in \eqref{eq:fe15'} and get a linear system with coefficient matrix
$$\left((-1)^{-m-r+j+n}\binom{-m-r-1}{j}\right)_{r,j=0,...,N-1}.$$
To prove each term in $\sum_{j=0}^{N-1}*$ lies in $U$, it is
enough to show that the coefficient matrix is nondegenerate. Note
that $m,n$ are fixed integers, it suffices to verify the matrix
$$\left((-1)^{r+j}\binom{-m-r-1}{j}\right)_{r,j=0,...,N-1}$$
 is nondegenerate. But this is clear. Using
 \eqref{eq:fe14}, we immediately see that
$$x_{\beta_{1}}(-m)x_{\beta_{2}}(-1)x_{\beta_{3}}(-n_{3})\cdots
x_{\beta_{k}}(-n_{k})\1\in U.$$

Next we prove that
$x_{\beta_{1}}(-m)x_{\beta_{2}}(-2)x_{\beta_{3}}(-n_{3})\cdots
x_{\beta_{k}}(-n_{k})\1\in U.$ The proof is similar. In this case
we let $-n_{2}=-2-n_{3}-\cdots-n_{p}$ and
$s-n=n_{3}+\cdots+n_{p}-1.$ Then for $j=0$, we have
$s+j+1-n-2-n_{3}-\cdots-n_{p}=-2$, and
$s+j+1-n-n_{i}=n_{3}+\cdots+n_{i-1}+n_{i+1}+\cdots+n_{p}\geq 0$
for $3\leq i\leq p.$ For $j=1,$ we have
$s+j+1-n-2-n_{3}-\cdots-n_{p}=-1$. For $j>1,$ we have
$s+j+1-n-2-n_{3}-\cdots-n_{p}=-2+j\geq0$, and for $j>0$,
$s+j+1-n-n_{i}=n_{3}+\cdots+n_{i-1}+n_{i+1}+\cdots+n_{p}+j>0$ for
$3\leq i\leq p.$ Note that
$x_{\beta_{1}}(-m)x_{\beta_{2}}(-1)x_{\beta_{3}}(-n_{3})\cdots
x_{\beta_{k}}(-n_{k})\1\in U.$ Similar to \eqref{eq:fe14}, we have
the following identity from \eqref{eq:fe13}:
\begin{equation}\label{eq:fe16}
\begin{split}
&  \qquad \quad (x_{\beta_{1}}(-m)x_{-\beta_{1}}(-n)\1)_{s}w\\
& \qquad
=\lambda(-1)^{n-1}\binom{s}{n-1}x_{\beta_{1}}(-m)x_{\beta_{2}}(-2)x_{\beta_{3}}(-n_{3})\cdots
x_{\beta_{t}}(-n_{t})\1+u\\
 & \qquad  -\sum_{j=0}^{N-1}(-1)^{-m+j+n-1}\binom{-m}{j}\binom{s-m-j}{n-1}(\lambda_2x_{-\beta_{1}}(s-m-j+1-n)\cdot\\
& \qquad \qquad x_{2\beta_1+\b_2}(-n_2+j)\prod_{i=3}^tx_{\beta_i}(-n_i)\1\\
&
\qquad+\lambda_{3}x_{-\beta_{1}}(s-m-j+1-n)x_{\beta_{1}+\beta_{2}}(-n_{2})x_{\beta_1+\b_3}(-n_3+j)\prod_{i=4}^tx_{\b_i}(-n_i)\1+\cdots\\
\\
& \qquad +
\!\lambda_{t}x_{-\beta_{1}}\!(\!s\!-\!m\!-\!j\!+\!1\!-\!n\!)x_{\beta_{1}\!+\!\beta_{2}}(\!-\!n_{2}\!)x_{\b_3}(-n_3)\cdots
x_{\beta_{t-1}}(-n_{t-1})x_{\b_1+\b_t}(-n_t+j)\1 ).
\end{split}
\end{equation}
for some $u\in U$ by the induction assumption.

Similarly, we can prove that each term in the summation
$\sum_{j=0}^{N-1}*$ in \eqref{eq:fe16} lies in $U.$ As a result,
$$x_{\beta_{1}}(-m)x_{\beta_{2}}(-2)x_{\beta_{3}}(-n_{3})\cdots
x_{\beta_{t}}(-n_{t})\1\in U.$$ Continuing in this way, we can
prove that
$$x_{\beta_{1}}(-m)x_{\beta_{2}}(-n)x_{\beta_{3}}(-n_{3})\cdots
x_{\beta_{t}}(-n_{t})\1\in U$$ for any $m,n,n_{3},\cdots,n_{k}>0$.
This completes the proof of Case 1.

{\bf Case 2.} For any $1\leq i,j\leq t,
\;\beta_{i}+\beta_{j}\notin \Delta.$ We claim that  there exist
$1\leq i^{'},j^{'}\leq t$ such that
$\beta_{i^{'}}+\beta_{j^{'}}=0.$ Otherwise, $\b_i+\b_j\ne 0$ for
all $i,j.$ This implies that $\<\b_i,\b_j\>\geq 0$ for all $i,j$,
thus $\<\b_1,\sum_{j=2}^t\b_j\>\geq 0.$ On the other hand, since
$\sum_{j=2}^t\b_j=-\b_1$, we have $\<\b_1,\sum_{j=2}^t\b_j\><0,$ a
contradiction.

Without loss, we may assume $\beta_{1}+\beta_{2}=0$ by the
induction assumption. Let $w=x_{\beta_{3}}(-n_{3})\cdots
x_{\beta_{t}}(-n_{t})\1$ with $\beta_{3}+\cdots+\beta_{t}=0.$ Take
$n=1$ in \eqref{eq:fel} and apply to $w$ to produce
\begin{equation}\label{eq:tfe11}
\begin{split}
& \qquad \quad (x_{\beta_{1}}(-m)x_{-\beta_{1}}(-1)\1)_{s}w\\
& \qquad = \sum_{ j \geq 0}(-1)^{j}\binom{-m}{j}x_{\beta_{1}}(-m-j)x_{-\beta_{1}}(s+j)w\\
& \qquad \quad  -\sum_{j\geq
0}(-1)^{-m+j}\binom{-m}{j}x_{-\beta_{1}}(s-m-j)x_{\beta_{1}}(j)w.
\end{split}
\end{equation}
We let $s=-1$ and
consider the first summation of the right hand side of
\eqref{eq:tfe11}. If $j=0$, then $s+j=-1$, and if $j\geq 1$ then
$s+j\geq 0.$  Thus the identity \eqref{eq:tfe11} becomes
\begin{equation}\label{eq:tfe12}
\begin{split}
&  \qquad \quad (x_{\beta_{1}}(-m)x_{-\beta_{1}}(-1)\1)_{s}w\\
& \qquad =
x_{\beta_{1}}(-m)x_{-\beta_{1}}(-1)x_{\beta_{3}}(-n_{3})\cdots
x_{\beta_{t}}(-n_{t})\1+u
\end{split}
\end{equation}
for some $u\in U$ by the induction assumption. Since the LHS of
 \eqref{eq:tfe12} is always in $U$, we have
$x_{\beta_{1}}(-m)x_{-\beta_{1}}(-1)x_{\beta_{3}}(-n_{3})\cdots
x_{\beta_{t}}(-n_{t})\1\in U$ for any $m, n_{3},\cdots, n_{t}>0$.

Similarly, we take $s=-2$ in  \eqref{eq:tfe11} and
work with the first summation of the
right hand side of \eqref{eq:tfe11} to obtain
\begin{equation*}\label{eq:tfe13}
\begin{split}
&  \qquad \quad (x_{\beta_{1}}(-m)x_{-\beta_{1}}(-1)\1)_{s}w\\
& \qquad =
x_{\beta_{1}}(-m)x_{-\beta_{1}}(-2)x_{\beta_{3}}(-n_{3})\cdots
x_{\beta_{t}}(-n_{t})\1\\
& \qquad +
mx_{\beta_{1}}(-m-1)x_{-\beta_{1}}(-1)x_{\beta_{3}}(-n_{3})\cdots
x_{\beta_{t}}(-n_{t})\1+u^{'}
\end{split}
\end{equation*}
for some $u^{'}\in U$ by the induction assumption. Since we have already
proved that
$$x_{\beta_{1}}(-m)x_{-\beta_{1}}(-1)x_{\beta_{3}}(-n_{3})\cdots
x_{\beta_{t}}(-n_{t})\1\in U$$ for any $m, n_{3},\cdots, n_{t}>0$,
we get
$$x_{\beta_{1}}(-m)x_{-\beta_{1}}(-2)x_{\beta_{3}}(-n_{3})\cdots
x_{\beta_{t}}(-n_{t})\1\in U$$ for any $m, n_{3},\cdots, n_{t}>0$.
Continuing in this way, we can prove that
$$x_{\beta_{1}}(-m)x_{-\beta_{1}}(-n)x_{\beta_{3}}(-n_{3})\cdots
x_{\beta_{t}}(-n_{t})\1\in U$$ for any $m,n,n_{3},\cdots,n_{k}>0$.
This completes the proof of Case 2.

\end{proof}

Next we discuss some  automorphisms of vertex operator algebras
$V(k,0)$ and $V(k,0)(0)$ for later purpose. It is well known that
the automorphism group $Aut(V(k,0))$ is isomorphic to the
automorphism group $Aut(\g).$ In fact, if $\sigma\in Aut(\g)$,
then $\sigma$ lifts to an automorphism of $V(k,0)$ in the
following way:
$$\sigma(x_1(-n_1)\cdots x_s(-n_s)\1)=(\sigma x_1)(-n_1)\cdots (\sigma
x_s)(-n_s)\1$$ for $x_i\in \g$ and $n_i>0.$ Let $W(\g)$ be the
Weyl group of $\g.$ Then $W(\g)$ can naturally be regarded as a
subgroup of $Aut (\g)$ \cite{Hu}. It is easy to see that if
$\sigma(\h)=\h$, then $\sigma (V(k,0)(0))=V(k,0)(0)$ and the
restriction of $\sigma$ to $V(k,0)(0)$ gives an automorphism of
$V(k,0)(0).$ In particular, any Weyl group element gives an
automorphism of $V(k,0)(0).$ This fact will be used in later
sections.

\section{Vertex operator algebra $N(\g,k)$ }
\label{walgebra}
\def\theequation{3.\arabic{equation}}
\setcounter{equation}{0}

Let $V_{\wh}(k,0)$ be the vertex operator subalgebra of $V(k,0)$
generated by $h(-1)\1$ for $h\in \mathfrak h$ with the Virasoro
element
$$\omega_{\mathfrak h} = \frac{1}{2k}
\sum_{i=1}^{l}h_i(-1)h_i(-1)\1$$
of central charge $l$, where $\{h_1,\cdots h_l\}$ is an
orthonormal basis of $\mathfrak h$ as before. For $\lambda\in
{\mathfrak h}^*$, let $M_{\wh}(k,\lambda)$ denote irreducible
highest weight module for $\wh$ with a highest weight vector
$v_\lambda$ such that $h(0)v_\lambda = \lambda(h) v_\lambda$ for
$h\in \mathfrak h.$ Then $V_{\wh}(k,0)$ is identified with
$M_{\wh}(k,0).$

Recall $V(k,0)(\lambda)$ from  Section 2. Both $V(k,0)$ and $V(k,0)(\lambda)$, $\lambda \in Q$
are completely reducible $V_{\wh}(k,0)$-modules. That is,
\begin{equation}\label{eq:dec-Heisenberg}
V(k,0) = \oplus_{\lambda\in Q} M_{\wh}(k,\lambda) \otimes
N_\lambda,
\end{equation}
\begin{equation}\label{eq:dec-Heisenberg1}
V(k,0)(\lambda)= M_{\wh}(k,\lambda) \otimes N_\lambda
\end{equation}
where
\begin{equation*}
N_\lambda = \{ v \in V(k,0)\,|\, h(m)v =\lambda(h)\delta_{m,0}v
\text{ for }  h\in \mathfrak h, m \ge 0\}
\end{equation*}
is the space of highest weight vectors with highest weight $\lambda$ for
$\wh.$

Note that $N(\g,k)=N_0$ is the commutant \cite[Theorem 5.1]{FZ} of
$V_{\wh}(k,0)$ in $V(k,0)$. The commutant $N(\g,k)$ is a vertex
operator algebra with the Virasoro vector $\omega =
\omega_{\mraff} - \omega_{\mathfrak h}$ whose central charge is
$\frac{k\dim \g}{k+h^{\vee}}-l.$

Recall from Section 2, the 3-dimensional subalgebra $\g^\alpha$
for $\alpha\in \Delta_+.$ Then the restriction $\<,\>_{\g^\alpha}$
of the bilinear form $\<,\>$ to $\g^\a$ is equal to
$\frac{\<\theta,\theta\>}{\<\alpha,\alpha\>}(,)$, where $(,)$ is
the standard nondegenerate symmetric invariant bilinear form on
$\g^\a$ such that $(h_\a,h_\a)=2.$ As a result, $V(k,0)$ is a
module for $\widehat{\g^\a}=\g^\a\otimes \C[t,t^{-1}]\oplus \C K$
of level $k_{\alpha}=\frac{\<\theta,\theta\>}{\<\alpha,\alpha\>}k$
as we regard $V(k,0)$ as a module for the subalgebra
$\widehat{\g^\a}$ of $\widehat\g.$ In other words, $V(k,0)$ is a
$\widehat{\g^\a}$-module of level $2k$ or $3k$ if $\alpha$ is a
short root.

Following \cite{DLY2}, we let
\begin{equation}\label{eq:w3}
\begin{split}
\omega_{\alpha} &=\frac{1}{2k(k+2)}( -k h_\alpha(-2)\1
-h_\alpha(-1)^{2}\1 +2kx_{\alpha}(-1)x_{-\alpha}(-1)\1),
\end{split}
\end{equation}
\begin{equation}\label{eq:W3}
\begin{split}
W_{\alpha}^3 &= k^2 h_\alpha(-3)\1 + 3 k h_\alpha(-2)h_\alpha(-1)\1
+
2h_\alpha(-1)^3\1 -6k h_\alpha(-1)x_{\alpha}(-1)x_{-\alpha}(-1)\1 \\
& \quad +3 k^2x_{\alpha}(-2)x_{-\alpha}(-1)\1 -3
k^2x_{\alpha}(-1)x_{-\alpha}(-2)\1
\end{split}
\end{equation}
if $\alpha\in\triangle_{+}$ is a long root. If $\alpha$ is a short
root, we also define $\omega_{\alpha}, W^3_{\alpha}$ as in
\eqref{eq:w3} and \eqref{eq:W3} by replacing $k$ by $k_{\alpha}.$

Let $\widehat{P}_{\alpha}$ be the vertex operator subalgebra of
$N(\g,k)$ generated by $\omega_{\alpha}$ and $W_\alpha^3.$ Then
$\widehat{P}_\alpha$ is isomorphic to the $W$-algebra $W(2,3,4,5)$
\cite{BEHHH} by \cite[Theorem 3.1]{DLY3} with $k$ replacing by
$k_\a$ or $N(sl_2,k_{\a}).$

The first main theorem of this paper is about the generators of $N(\g,k)$.
\begin{thm}\label{generator2} The vertex operator
algebra $N(\g,k)$ is generated by $\dim \g-l$ vectors $\omega_{\alpha}$
and $W_{\alpha}^3$ for $\alpha\in\Delta_{+}$. That is, $N(\g,k)$ is generated
by subalgebras $\widehat{P}_{\a}$ for $\a\in \Delta_+.$
\end{thm}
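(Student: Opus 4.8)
The plan is to exploit the Heisenberg-module decomposition $V(k,0)=\bigoplus_{\lambda\in Q}M_{\wh}(k,\lambda)\otimes N_\lambda$ together with Theorem \ref{generator1}. Let $U$ be the vertex operator subalgebra of $N(\g,k)$ generated by the $\omega_\alpha$ and $W_\alpha^3$ for $\alpha\in\Delta_+$; equivalently, $U$ is generated by the subalgebras $\widehat P_\alpha$. We must show $U=N(\g,k)$. First, since $N(\g,k)=N_0$ sits inside $V(k,0)(0)$ as the commutant of the Heisenberg algebra, and since $V(k,0)(0)=M_{\wh}(k,0)\otimes N(\g,k)$ with $M_{\wh}(k,0)$ generated by the $h_i(-1)\1$, it suffices to show that the subalgebra of $V(k,0)(0)$ generated by the $h_i(-1)\1$ together with $U$ equals all of $V(k,0)(0)$. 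By Theorem \ref{generator1}, $V(k,0)(0)$ is generated by the vectors $\alpha_i(-1)\1$ and $x_{-\alpha}(-2)x_\alpha(-1)\1$ for $1\le i\le l$ and $\alpha\in\Delta_+$. The $\alpha_i(-1)\1$ lie in the Heisenberg part, so the crux is to produce each $x_{-\alpha}(-2)x_\alpha(-1)\1$ from the Heisenberg generators together with $\omega_\alpha$ and $W_\alpha^3$.

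The key reduction is that each such vector involves only the copy of $\widehat{\g^\alpha}\cong\widehat{sl_2}$ (at level $k_\alpha$) generated by $x_{\pm\alpha}$ and $h_\alpha$. Inside this $sl_2$-subpicture, $\omega_\alpha$ and $W_\alpha^3$ are, up to Heisenberg-generated corrections built from $h_\alpha(-n)\1$, exactly the defining generators of $N(sl_2,k_\alpha)$ as in \eqref{eq:w3} and \eqref{eq:W3}. Concretely, from $\omega_\alpha$ one recovers $x_\alpha(-1)x_{-\alpha}(-1)\1$ modulo Heisenberg terms, and from $W_\alpha^3$ one recovers the combination $x_\alpha(-2)x_{-\alpha}(-1)\1-x_\alpha(-1)x_{-\alpha}(-2)\1$ modulo Heisenberg terms and modulo $h_\alpha(-1)x_\alpha(-1)x_{-\alpha}(-1)\1$ (which is in turn obtained by a component-product of $h_\alpha(-1)\1$ with the previously recovered vector). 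Thus $U$ together with the Heisenberg subalgebra contains both $x_\alpha(-1)x_{-\alpha}(-1)\1$ and $x_\alpha(-2)x_{-\alpha}(-1)\1-x_\alpha(-1)x_{-\alpha}(-2)\1$. Applying suitable modes of $h_\alpha(-1)\1$, $L(-1)$ (translation), and the already-available vectors, one then separates $x_\alpha(-2)x_{-\alpha}(-1)\1$ from $x_\alpha(-1)x_{-\alpha}(-2)\1$: the operator $L(-1)$ sends $x_\alpha(-1)x_{-\alpha}(-1)\1$ to $x_\alpha(-2)x_{-\alpha}(-1)\1+x_\alpha(-1)x_{-\alpha}(-2)\1$, which combined with the antisymmetric combination yields both individually. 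Rewriting $x_\alpha\mapsto x_{-\alpha}$ (equivalently applying the Weyl reflection $s_\alpha$, which by the remark after Theorem \ref{generator1} is an automorphism of $V(k,0)(0)$ fixing the relevant structure) gives $x_{-\alpha}(-2)x_\alpha(-1)\1\in U\cdot(\text{Heisenberg})$ as required. Since this holds for every $\alpha\in\Delta_+$, all generators of $V(k,0)(0)$ from Theorem \ref{generator1} lie in the algebra generated by the Heisenberg part and $U$, so that algebra is all of $V(k,0)(0)=M_{\wh}(k,0)\otimes N(\g,k)$.

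To finish, I would take the Heisenberg-highest-weight-vector projection. The map $\pi\colon V(k,0)(0)\to N(\g,k)$, $u\otimes n\mapsto \varepsilon(u)\,n$ (projection onto the $\lambda=0$ lowest-conformal-weight line of $M_{\wh}(k,0)$, i.e.\ picking out the component along $\1\in M_{\wh}(k,0)$), is a map of $N(\g,k)$-modules, it restricts to the identity on $N(\g,k)=\1\otimes N(\g,k)$, and it kills the augmentation ideal of the Heisenberg subalgebra. Since $U\subseteq N(\g,k)$ and $U$ together with the Heisenberg subalgebra generates $V(k,0)(0)$, applying $\pi$ (which is compatible with all the $n$-th product operations on the $N(\g,k)$-module $V(k,0)(0)$ once we note $\pi$ intertwines the $N(\g,k)$-action) shows $\pi(V(k,0)(0))\subseteq U$; but $\pi$ is surjective onto $N(\g,k)$, hence $N(\g,k)\subseteq U$, and the reverse inclusion is clear. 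Therefore $N(\g,k)=U$, which is precisely the assertion that $N(\g,k)$ is generated by the $\widehat P_\alpha$, equivalently by the $\dim\g-l$ vectors $\omega_\alpha,W_\alpha^3$ for $\alpha\in\Delta_+$.

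The main obstacle is the bookkeeping in the second paragraph: verifying that the \emph{only} non-Heisenberg contributions in $\omega_\alpha$ and $W_\alpha^3$ are the two vectors $x_\alpha(-1)x_{-\alpha}(-1)\1$ and $x_\alpha(-2)x_{-\alpha}(-1)\1-x_\alpha(-1)x_{-\alpha}(-2)\1$, and that the vertex-algebra products used to disentangle them (modes of $h_\alpha(-1)\1$, of $\omega_\alpha$ itself, and $L(-1)$) stay within the subalgebra generated by the $\widehat P_\alpha$ and the Heisenberg algebra. One must also be slightly careful about short roots, where $k$ is replaced by $k_\alpha$ throughout, and about the fact that products of $\widehat P_\alpha$-elements with Heisenberg elements land back in $V(k,0)(0)$; the $\h$-weight-zero condition guarantees this. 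Once these identities are in hand, the projection argument of the third paragraph is formal.
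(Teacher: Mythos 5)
Your proposal is correct and follows essentially the same route as the paper: reduce to Theorem \ref{generator1} by recovering $x_{-\alpha}(-1)x_{\alpha}(-1)\1$ from $\omega_\alpha$ and the antisymmetric combination $x_{-\alpha}(-1)x_{\alpha}(-2)\1-x_{-\alpha}(-2)x_{\alpha}(-1)\1$ from $W_\alpha^3$ modulo Heisenberg terms, separate them with $L_{\mraff}(-1)$, and then pass from $V(k,0)(0)=M_{\wh}(k,0)\otimes N(\g,k)$ to $N(\g,k)$ (your vacuum-projection $\pi$ is just a reformulation of the paper's commutativity/tensor-factor argument). The only item you gloss over is the verification that $\omega_\alpha,W_\alpha^3$ actually lie in $N(\g,k)$, i.e.\ are annihilated by $h(n)$ for all $h\in\h$ and $n\ge 0$, which the paper checks by splitting $\h=\C h_\alpha\oplus(\C h_\alpha)^{\perp}$.
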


\begin{proof} We first prove that
$V(k,0)(0)$ is generated by vectors $\alpha_{i}(-1)\1$,
$\omega_{\alpha}$ and $W_{\alpha}^3$ for $i=1,\cdots,l$ and
$\alpha\in\Delta_{+}$. In fact, let $U$ be the vertex operator
subalgebra generated by  $h(-1)\1$, $\omega_{\alpha}$ and
$W_{\alpha}^3$ for $h\in\h$ and $\alpha\in\Delta_{+}$. Then
 $x_{-\alpha}(-1)x_{\alpha}(-1)\1\in U$ and $\omega_{\mraff}\in U$. Moreover, from the expression
 of $W_{\alpha}^3$, we see that $x_{-\alpha}(-1)x_{\alpha}(-2)\1 -
x_{-\alpha}(-2)x_{\alpha}(-1)\1\in U$. Set
$L_{\mraff}(n)=({\omega_{\mraff}})_{n+1}$, we have
\begin{equation*}
[L_{\mraff}(m), a(n)]=-na(m+n)
\end{equation*}
for $m,n\in\Z, a\in \g$. Thus,
\begin{equation*}
\begin{split}
L_{\mraff}(-1)x_{-\alpha}(-1)x_{\alpha}(-1)\1=x_{-\alpha}(-2)x_{\alpha}(-1)\1+x_{-\alpha}(-1)x_{\alpha}(-2)\1\in
U.
\end{split}
\end{equation*}
Together with $x_{-\alpha}(-1)x_{\alpha}(-2)\1 -
x_{-\alpha}(-2)x_{\alpha}(-1)\1\in U$, we get
$x_{-\alpha}(-2)x_{\alpha}(-1)\1 \in U$, and so $U$ is equal to
$V(k,0)(0)$ by Theorem \ref{generator1}.

Next we show that $\omega_{\alpha}, W_{\alpha}^3\in N(\g,k)$ for
$\alpha\in \Delta.$ Since $\la h_\alpha,h_\alpha\ra \ne 0,$ we
have decomposition $\mathfrak h=\C h_\alpha \oplus (\C
h_\alpha)^{\bot}$, where $(\C h_\alpha)^{\bot}$ is the orthogonal
complement of $\C h_\alpha$ with respect to $\la,\ra.$ From
\cite{DLY3}, we know that
$h_\alpha(n)\omega_{\alpha}=h_\alpha(n)W_{\alpha}^3=0$ for $n\geq
0.$ If $u\in (\C h_\alpha)^{\bot},$ we clearly have
$u(n)\omega_{\alpha}=u(n)W_{\alpha}^3=0$ for $n\geq 0.$ This
implies that $\omega_{\alpha}, W_{\alpha}^3\in N(\g,k).$

Notice that $Y(u,z_1)Y(v,z_2 )=Y(v,z_2)Y(u,z_1)$ for $u \in
M_{\wh}(k,0)$ and $v \in N(\g,k)$. Since $V(k,0)(0)= M_{\wh}(k,0)
\otimes N(\g,k),$ $h(-1)\1 \in M_{\wh}(k,0)$ for $h\in \h$, and
$\omega_{\alpha}, W_{\alpha}^3 \in N(\g,k)$ for
$\alpha\in\Delta_{+}$, we conclude that $N(\g,k)$ is generated by
$\omega_{\alpha}, W_{\alpha}^3 $ for $\alpha\in\Delta_{+}$.
\end{proof}

\begin{rmk} Using the $Z$-algebra introduced and studied in \cite{LP}
and \cite{LW1}-\cite{LW2}, we can rewrite $\omega_\a$ and
$W_\alpha^3$ in terms of $Z$-operators $Z_{\a}(m)$ and
$Z_{-\a}(n).$ It is not too hard to see  that
$\omega_\a=a_{\alpha}Z_{\a}(-1)Z_{-\a}(-1)\1$ and $W_\a^3=b_\a
Z_\a(-2)Z_{-\a}(-1)\1+c_{\alpha}Z_{-\a}(-2)Z_{\a}(-1)\1$ for some
constants $a_{\a}, b_{\a}, c_\alpha\in \C.$ One could determine
these constants explicitly using the definition of $Z$-operators.
\end{rmk}

\begin{rmk} The vertex operator algebra $N(\g,k)$ and its quotient $K(\g,k)$
are of moonshine type. That is, their weight zero subspaces are 1-dimensional
and weight one subspaces are zero.
\end{rmk}

\begin{rmk} We have already known that\underline{}
each $\omega_\a$ is a Virasoro element and how to compute the Lie
brackets $[Y(\omega_\a, z_1), Y(W_\a^3,z_2)],[Y(W^3_\a, z_1),
Y(W_\a^3,z_2)]$ for $\a\in \Delta_+.$ It is important to calculate
the Lie brackets for vertex operators associated to vectors in
different $\widehat{P_\a}.$ This will be done in a sequel to this
paper where the representation theory will be investigated.
\end{rmk}

Following the discussion given at the end of Section 2, we see that
any Weyl group element gives an automorphism of $N(\g,k).$

\section{Parafermion vertex operator algebras $K(\g,k)$
}\label{Sect:maximal-ideal-tI}
\def\theequation{4.\arabic{equation}}
\setcounter{equation}{0}

It is well known that vertex operator algebra $V(k,0)$ has a
unique maximal ideal $\J$ generated by a weight $k+1$ vector
$x_{\theta}(-1)^{k+1}\1$ \cite{K}, where $\theta$ is the highest
root of $\g$. The quotient vertex operator algebra $L(k,0) =
V(k,0)/\J$ is a simple, rational  vertex operator algebra
associated to affine Lie algebra $\widehat{\g}.$ Again, the
Heisenberg vertex operator algebra $V_{\wh}(k,0)$ generated by
$h(-1)\1$ for $h\in \mathfrak h$ is a simple subalgebra of
$L(k,0)$ and $L(k,0)$ is a completely reducible
$V_{\wh}(k,0)$-module. We have a decomposition
\begin{equation}
L(k,0) = \oplus_{\lambda\in Q} M_{\wh}(k,\lambda) \otimes
K_\lambda
\end{equation}
as modules for $V_{\wh}(k,0)$, where
\begin{equation*}
K_\lambda = \{v \in L(k,0)\,|\, h(m)v =\lambda(h)\delta_{m,0}v
\text{ for }\; h\in {\mathfrak h},
 m \ge 0\}.
\end{equation*}
Set $K(\g,k)=K_0.$ Then $K(\g,k)$ is the commutant of
$V_{\wh}(k,0)$ in $L(k,0)$ and called the parafermion vertex
operator algebra associated to the irreducible highest weight
module $L(k,0)$ for $\widehat{\g}.$ As we mentioned in the
introduction, $K(\g,k)$ are conjectrued to be rational,
$C_2$-cofinite vertex operator algebras.

As a $V_{\wh}(k,0)$-module, $\J$ is completely reducible. From
\eqref{eq:dec-Heisenberg},
\begin{equation*}
\J = \oplus_{\lambda\in Q} M_{\wh}(k,\lambda) \otimes (\J \cap
N_\lambda).
\end{equation*}
In particular, $\tI = \J \cap N(\g,k)$ is an ideal of $N(\g,k)$ and $K(\g,k)
\cong N(\g,k)/\tI$. The same proof as \cite[Lemma 3.1]{DLY2}, we know
that $\tI$ is the unique maximal ideal of $N(\g,k).$ Thus $K(\g,k)$ is a
simple vertex operator algebra.

We still use $\omega_{\mraff}$, $\omega_{\mathfrak h}$,
$\omega_{\alpha}$, $W_{\alpha}^3$ to denote their
images in $L(k,0) = V(k,0)/\J$.
\begin{rmk} In the case $k=1$, it follows from the construction
of $L(1,0)$ \cite{FK}, \cite{FLM} that $\omega=0$ and $K(\g,k)=\C.$
\end{rmk}

The following result is a direct consequence of Theorem
\ref{generator2}.

\begin{thm}\label{generator3} The simple vertex operator algebra $K(\g,k)$
is generated by $\omega_{\alpha}$, $W_{\alpha}^3$ for $\alpha\in \Delta_{+}$.
\end{thm}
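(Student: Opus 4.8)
The plan is to deduce Theorem~\ref{generator3} from Theorem~\ref{generator2} together with the identification $K(\g,k)\cong N(\g,k)/\tI$. Since $N(\g,k)$ is generated as a vertex operator algebra by the vectors $\omega_\alpha, W_\alpha^3$ for $\alpha\in\Delta_+$ (Theorem~\ref{generator2}), and the quotient map $\pi\colon N(\g,k)\to N(\g,k)/\tI\cong K(\g,k)$ is a surjective homomorphism of vertex operator algebras, the images $\pi(\omega_\alpha), \pi(W_\alpha^3)$ generate the image $\pi(N(\g,k))=K(\g,k)$. Under the standing convention already fixed in the excerpt, these images are again denoted $\omega_\alpha, W_\alpha^3$, so $K(\g,k)$ is generated by $\omega_\alpha, W_\alpha^3$ for $\alpha\in\Delta_+$.

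In more detail, first I would recall that a homomorphism of vertex operator algebras intertwines all the modes: if $S\subset N(\g,k)$ generates $N(\g,k)$, meaning the smallest subspace containing $\1$ and $S$ and closed under all operators $u_n$ ($u$ in that subspace, $n\in\Z$) is all of $N(\g,k)$, then for the surjection $\pi$ we have $\pi(u_n v)=\pi(u)_n\pi(v)$, so the subalgebra generated by $\pi(S)$ contains $\pi$ of the subalgebra generated by $S$, i.e. contains $\pi(N(\g,k))=K(\g,k)$. Hence $K(\g,k)$ is generated by $\pi(S)$. Applying this with $S=\{\omega_\alpha, W_\alpha^3 : \alpha\in\Delta_+\}$ and invoking Theorem~\ref{generator2} gives the claim immediately. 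Equivalently, in the language of the excerpt, $K(\g,k)$ is generated by the subalgebras $P_\alpha$, the images of $\widehat P_\alpha$, for $\alpha\in\Delta_+$.

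Honestly, there is no real obstacle here: the content is entirely in Theorem~\ref{generator2} (whose proof rests on Theorem~\ref{generator1}), and Theorem~\ref{generator3} is the routine passage to a quotient. The only point that deserves an explicit sentence is that $\tI$ is a genuine vertex operator algebra ideal and $K(\g,k)\cong N(\g,k)/\tI$ as vertex operator algebras, which is recorded just before the statement; given that, the generation statement descends automatically. Thus the proof is a single short paragraph: take the generating set from Theorem~\ref{generator2}, push it through the quotient map onto $K(\g,k)$, and note the images are by convention still written $\omega_\alpha, W_\alpha^3$.
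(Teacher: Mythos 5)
Your proposal is correct and matches the paper exactly: the paper states Theorem \ref{generator3} as a direct consequence of Theorem \ref{generator2}, precisely because generation descends through the surjective quotient map $N(\g,k)\to N(\g,k)/\tI\cong K(\g,k)$. Your elaboration of why images of generators generate the quotient is the (routine) content the paper leaves implicit.
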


Next, we study the ideal $\tI$ of $N(\g,k)$ in detail. The vector
$x_{\theta}(-1)^{k+1}\1\notin N(\g,k)$. From \cite[Theorem
3.2]{DLY2} we know that
$h_\theta(n)x_{-\theta}(0)^{k+1}x_{\theta}(-1)^{k+1}\1=0$ for
$n\geq 0.$ It is clear that if $h\in \mathfrak h$ satisfying $\la
h_\theta,h\ra=0$, then
$h(n)x_{-\theta}(0)^{k+1}x_{\theta}(-1)^{k+1}\1=0$ for $n\geq 0.$
So we have proved the following

\begin{lem}\label{l1} $x_{-\theta}(0)^{k+1}x_{\theta}(-1)^{k+1}\1\in \tI.$
\end{lem}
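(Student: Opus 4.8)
The plan is to show that $x_{-\theta}(0)^{k+1}x_{\theta}(-1)^{k+1}\1$ lies in the maximal ideal $\tI=\J\cap N(\g,k)$ of $N(\g,k)$ by checking two things: first, that this vector belongs to $N(\g,k)$, i.e.\ that it is a highest weight vector of weight $0$ for the Heisenberg algebra $\wh$; and second, that it lies in $\J$, the maximal ideal of $V(k,0)$. The first point is the substance, and it is precisely what is supplied by the two ingredients cited before the statement. Since $x_\theta$ has $\h$-weight $\theta$ and $x_{-\theta}$ has weight $-\theta$, the vector $x_{-\theta}(0)^{k+1}x_{\theta}(-1)^{k+1}\1$ has total $\h$-weight $(k+1)\theta-(k+1)\theta=0$, so $h(0)$ acts as $0$ on it for every $h\in\h$. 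To conclude it is a highest weight vector for $\wh$ we must also verify $h(n)$ kills it for all $n\ge 1$.

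For the positive modes, I would decompose $\h=\C h_\theta\oplus(\C h_\theta)^{\perp}$ using $\la h_\theta,h_\theta\ra\neq 0$, exactly as in the proof of Theorem~\ref{generator2}. For $h\in(\C h_\theta)^{\perp}$ one has $\la h,\theta\ra=0$, so $[h(n),x_{\pm\theta}(m)]=0$ for all $m,n$, and since $h(n)\1=0$ for $n\ge 0$ it follows immediately that $h(n)x_{-\theta}(0)^{k+1}x_{\theta}(-1)^{k+1}\1=0$ for $n\ge 1$ (indeed for $n\ge 0$). For the remaining direction, $h=h_\theta$, we invoke \cite[Theorem 3.2]{DLY2}, which is quoted in the paragraph preceding the Lemma: $h_\theta(n)x_{-\theta}(0)^{k+1}x_{\theta}(-1)^{k+1}\1=0$ for $n\ge 0$. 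Combining the two cases over the direct-sum decomposition of $\h$ gives $h(n)x_{-\theta}(0)^{k+1}x_{\theta}(-1)^{k+1}\1=0$ for all $h\in\h$ and $n\ge 0$, which says exactly that this vector lies in $N_0=N(\g,k)$.

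Finally, for membership in $\J$: the vector $x_\theta(-1)^{k+1}\1$ is the singular vector generating $\J$ as a $\widehat\g$-module (and hence as an ideal of the vertex operator algebra $V(k,0)$), so in particular $x_{-\theta}(0)^{k+1}x_\theta(-1)^{k+1}\1$, obtained by acting with the element $x_{-\theta}(0)^{k+1}\in U(\widehat\g)$, lies in $\J$. Therefore the vector lies in $\J\cap N(\g,k)=\tI$, which is the assertion. The only genuine input is \cite[Theorem 3.2]{DLY2} for the $h_\theta$-direction; everything else is a weight bookkeeping argument, and I do not anticipate any obstacle beyond correctly citing that result.
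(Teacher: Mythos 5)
Your proposal is correct and follows essentially the same route as the paper: the orthogonal decomposition $\h=\C h_\theta\oplus(\C h_\theta)^{\perp}$, the citation of \cite[Theorem 3.2]{DLY2} for the $h_\theta$-direction, and the trivial commutation in the perpendicular directions. You also spell out the membership in $\J$ (which the paper leaves implicit), and that step is fine since $x_{-\theta}(0)^{k+1}\in U(\widehat\g)$ preserves the $\widehat\g$-submodule $\J$.
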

Furthermore, we have

\begin{prop}\label{Conj:ideal-generator}
 The maximal ideal $\tI$ of $N(\g,k)$ is generated by $x_{-\theta}(0)^{k+1}x_{\theta}(-1)^{k+1}\1$.

\end{prop}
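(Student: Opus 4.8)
The plan is to show that the ideal $\tI$ of $N(\g,k)$ generated by $v_\theta := x_{-\theta}(0)^{k+1}x_{\theta}(-1)^{k+1}\1$ (which is contained in $\tI$ by Lemma \ref{l1}) is in fact all of $\tI$. Write $\<v_\theta\>$ for this subideal. Since $\tI$ is the \emph{unique} maximal ideal of $N(\g,k)$, it suffices to prove that $\<v_\theta\>$ is not the zero ideal and that the quotient $N(\g,k)/\<v_\theta\>$ is simple, or — more directly — to show that the unique maximal ideal $\J$ of $V(k,0)$ is recovered from $v_\theta$ after tensoring back up by the Heisenberg algebra. Concretely, consider the ideal $I$ of $V(k,0)$ generated by $v_\theta$. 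Since $v_\theta\in N(\g,k)\subset V(k,0)$ and $v_\theta$ is a highest weight vector of weight $0$ for $\wh$, and since $V(k,0)$ is a completely reducible $V_{\wh}(k,0)$-module with $V(k,0)(0)=M_{\wh}(k,0)\otimes N(\g,k)$, one has $I\cap N(\g,k)=\<v_\theta\>$. So it is enough to prove that $I=\J$, i.e. that $v_\theta$ generates the maximal ideal of $V(k,0)$; the proposition then follows by intersecting with $N(\g,k)$ and using $\tI=\J\cap N(\g,k)$.

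The key step is therefore: the $\widehat{\g}$-submodule of $V(k,0)$ generated by $v_\theta$ contains the standard generator $x_\theta(-1)^{k+1}\1$ of $\J$. First I would observe that $v_\theta$ is a nonzero singular-type vector built from the $\widehat{sl}_2$-triple $\g^\theta=\C x_\theta+\C h_\theta+\C x_{-\theta}$ attached to the highest root, with $V(k,0)$ viewed as a level-$k$ module for $\widehat{\g^\theta}$. Within this $\widehat{sl}_2$-copy, $x_\theta(-1)^{k+1}\1$ and $x_{-\theta}(0)^{k+1}x_\theta(-1)^{k+1}\1$ lie in the same $\widehat{sl}_2$-submodule: applying $x_\theta(0)$ repeatedly to $v_\theta$ and using the $sl_2$-commutation relations (the identity $x_\theta(0)^{k+1}x_{-\theta}(0)^{k+1}w = (\text{nonzero scalar})\, w + (\text{lower-order terms})$ on a weight vector $w$, which is the classical $sl_2$ computation applied in the zero mode) recovers $x_\theta(-1)^{k+1}\1$ up to a nonzero constant, modulo terms that are again in the $\widehat{\g}$-ideal generated by $v_\theta$. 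Hence $x_\theta(-1)^{k+1}\1\in I$, so $\J\subseteq I$; the reverse inclusion $I\subseteq\J$ is automatic because $v_\theta\in\tI\subseteq\J$ and $\J$ is an ideal. Therefore $I=\J$.

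Finally I would descend: since $\tI=\J\cap N(\g,k)$ and $\<v_\theta\>=I\cap N(\g,k)=\J\cap N(\g,k)$, we conclude $\<v_\theta\>=\tI$, which is the assertion. The main obstacle I anticipate is the bookkeeping in the step "apply $x_\theta(0)^{k+1}$ to $v_\theta$ and extract $x_\theta(-1)^{k+1}\1$": one must check that the leading coefficient is genuinely nonzero for every $k\ge 1$ (this is where the factor $(k+1)!$ or a product $\prod_{i=1}^{k+1} i$ type expression from the $sl_2$ ladder appears, and it must not vanish) and that all the correction terms produced along the way already lie in the ideal generated by $v_\theta$ — i.e. that no genuinely new vectors outside $I$ are needed. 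A clean way to organize this is to note that within the $\widehat{sl}_2$-module generated by $v_\theta$, the vector $x_\theta(-1)^{k+1}\1$ is the (unique up to scalar) highest weight vector of that submodule for the finite $sl_2=\C x_\theta\oplus\C h_\theta\oplus\C x_{-\theta}$ acting via zero modes, so it is automatically in the submodule; one only needs nonvanishing, which is the classical fact that $x_{-\theta}(0)^{k+1}$ is injective on the weight space it acts on here. With that, the argument closes.
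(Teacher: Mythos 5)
Your overall strategy is the same as the paper's: first show that $v_\theta=x_{-\theta}(0)^{k+1}x_{\theta}(-1)^{k+1}\1$ already generates the maximal ideal $\J$ of $V(k,0)$ by finite-dimensional $sl_2$ (zero-mode) representation theory, then descend to $N(\g,k)$ through the decomposition $V(k,0)(0)=M_{\wh}(k,0)\otimes N(\g,k)$. Your first half is fine and even slightly cleaner than you fear: since $x_\theta(0)$ commutes with $x_\theta(-1)$ and kills $\1$, the vector $x_\theta(-1)^{k+1}\1$ is a genuine highest weight vector of weight $2(k+1)$ for $\g^\theta$ acting by zero modes, so $x_\theta(0)^{k+1}v_\theta=c\,x_\theta(-1)^{k+1}\1$ with $c=(k+1)!\,(2k+2)(2k+1)\cdots(k+2)\neq 0$ and \emph{no} correction terms; the paper packages the same fact as the statement that the $\g^\theta$-module $X$ spanned by $x_{-\theta}(0)^{i}x_\theta(-1)^{k+1}\1$, $0\le i\le 2(k+1)$, is irreducible, so any nonzero vector of $X$ generates $\J$.

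The step you cannot simply assert is $I\cap N(\g,k)=\langle v_\theta\rangle$, where $I$ is the ideal of $V(k,0)$ generated by $v_\theta$ and $\langle v_\theta\rangle$ the ideal of $N(\g,k)$ it generates. The inclusion $\supseteq$ is trivial, but $\subseteq$ is exactly where the content lies, and complete reducibility over the Heisenberg algebra alone does not give it: a priori $I$ is spanned by \emph{iterated} mode applications $u^{(1)}_{n_1}\cdots u^{(r)}_{n_r}v_\theta$, and an element of total $Q$-degree zero can arise from factors $u^{(i)}$ that individually do not lie in $V(k,0)(0)$, so you cannot directly factor through $M_{\wh}(k,0)\otimes N(\g,k)$. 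The paper closes this by invoking the single-generator spanning result (\cite[Corollary 4.2]{DM} or \cite[Proposition 4.1]{L}): the ideal generated by one vector is already spanned by the single modes $u_nv_\theta$, $u\in V(k,0)$, $n\in\Z$. Then the $Q$-grading forces $u\in V(k,0)(0)$ for the degree-zero part, and writing $u=v\otimes w$ with $Y(v\otimes w,z)=Y(v,z)\otimes Y(w,z)$ shows $I\cap V(k,0)(0)\subseteq M_{\wh}(k,0)\otimes\langle v_\theta\rangle$, whence $\tI=\J\cap N(\g,k)=I\cap N(\g,k)\subseteq\langle v_\theta\rangle$. Adding this ingredient makes your argument complete and essentially identical to the paper's.
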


\begin{proof}
The proof is similar to that of \cite[Theorem 4.2 (1)]{DLY3}. Recall
$\g^{\theta}=\C x_{\theta}+\C h_\theta+\C x_{-\theta}$ is the
subalgebra of $\g$ isomorphic to $sl_2.$  $V(k,0)$ is an
$\g^{\theta}$-module where $a \in \g^{\theta}$ acts as $a(0)$. Each
weight subspace of the vertex operator algebra $V(k,0)$ is a finite
dimensional $\g^\theta$-module and $V(k,0)$ is completely reducible
as a module for $\g^\theta$. Consider the $\g^\theta$-submodule $X$
of $V(k,0)$ generated by $x_{\theta}(-1)^{k+1}\1$. Since
$x_{\theta}(0)x_{\theta}(-1)^{k+1}\1 =0$ and
$h_\theta(0)x_{\theta}(-1)^{k+1}\1 = 2(k+1)x_{\theta}(-1)^{k+1}\1,$
$x_{\theta}(-1)^{k+1}\1$ is a highest weight vector with highest
weight $2(k+1)$ for $\g^\theta$. Then $X$ is an irreducible
$\g^\theta$-module with basis
$x_{-\theta}(0)^{i}x_{\theta}(-1)^{k+1}\1$, $0 \le i \le 2(k+1)$
from the representation theory of $sl_2$. This implies that the
ideal $\J$ of the vertex operator algebra $V(k,0)$ can be generated
by any nonzero vector in $X$. In particular, $\J$ is generated by
$x_{-\theta}(0)^{k+1}x_{\theta}(-1)^{k+1}\1$. Then $\J$ is spanned
by $u_n x_{-\theta}(0)^{k+1}x_{\theta}(-1)^{k+1}\1$ for $u \in
V(k,0)$ and $n \in \Z$ by \cite[Corollary 4.2]{DM} or
\cite[Proposition 4.1]{L}.

Since $v_mu\in V(k,0)(\lambda+\mu)$ for $v\in V(k,0)(\lambda),$
$u\in V(k,0)(\mu),$ $\lambda,\mu\in Q$ and $m\in\Z,$ we see that
$u_nx_{-\theta}(0)^{k+1}x_{\theta}(-1)^{k+1}\1 \in \J \cap
V(k,0)(0)$ if and only if $u \in V(k,0)(0)$. Let $u = v\otimes
w\in V(k,0)(0) = M_{\wh}(k,0) \otimes N(\g,k)$ with $v \in
M_{\wh}(k,0)$ and $w\in N(\g,k)$. Then $Y(u,z)=Y(v,z)\otimes
Y(w,z)$ acts on $M_{\wh}(k,0) \otimes N(\g,k)$. As a result, we
have that $\tI$ is spanned by $w_n
x_{-\theta}(0)^{k+1}x_{\theta}(-1)^{k+1}\1$ for $w \in N(\g,k)$
and $n \in \Z$. That is, the ideal $\tI$ of the vertex operator
algebra $N(\g,k)$ is generated by
$x_{-\theta}(0)^{k+1}x_{\theta}(-1)^{k+1}\1$. The proof is
complete.
\end{proof}

For $\alpha\in \Delta_+$, we let $P_{\alpha}$ be the vertex
operator subalgebra of $K(\g,k)$ generated by $\omega_{\alpha}$
and $W_\alpha^3.$ Then $P_\alpha$ is a quotient of
$\widehat{P}_{\alpha}.$ A natural question is whether or not
$P_\alpha$ is a simple vertex operator algebra. For this purpose,
we recall our discussion earlier on the automorphisms of vertex
operator algebra $V(k,0)$ and $N(\g,k).$ That is, any Weyl group
element gives an automorphism of $V(k,0)$ and $N(\g,k).$

Clearly, any automorphism $\sigma$ of $V(k,0)$ induces an
automorphism of $L(k,0)$ as $\sigma$ maps the unique maximal ideal
$\J$ to $\J.$ If $\sigma\in W(\g)$, then $\sigma$ preserves the
unique maximal ideal $\tI$ and $\sigma$ gives an automorphism of
the parafermion vertex operator algebra $K(\g,k),$ Now let
$\alpha\in \Delta_+$ be a long root. Then there exists $\sigma\in
W(\g)$ such that $\sigma \theta=\alpha$ \cite{Hu}.  As a result,
$$\sigma(x_{-\theta}(0)^{k+1}x_{\theta}(-1)^{k+1}\1)=
ax_{-\a}(0)^{k+1}x_{\a}(-1)^{k+1}\1$$
for some constant $a.$
This implies from Lemma \ref{l1} that
$x_{-\a}(0)^{k+1}x_{\a}(-1)^{k+1}\1\in \tI.$ Using \cite[Theorem 4.2]{DLY3}
we obtain:
\begin{prop}\label{plast} For any long root $\alpha\in \Delta_+,$ the vertex operator
subalgebra $P_{\alpha}$ of $K(\g,k)$ is a simple vertex operator algebra
isomorphic to  the parafermion vertex operator algebra $K(sl_2,k)$ associated
to $sl_2.$
\end{prop}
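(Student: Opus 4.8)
The plan is to identify $P_\alpha$ as a quotient of $\widehat{P}_\alpha \cong N(sl_2, k)$ (using that $\alpha$ is a long root, so $k_\alpha = k$) and then pin down exactly which quotient it is by locating the defining vector of the maximal ideal. Concretely, I would first invoke the automorphism argument: choose $\sigma \in W(\g)$ with $\sigma\theta = \alpha$, which exists since all long roots are Weyl-conjugate. Since $\sigma$ preserves $\J$ and hence descends to an automorphism of $K(\g,k)$ carrying $\widehat{P}_\theta$ to $\widehat{P}_\alpha$ (because $\sigma\omega_\theta = \omega_\alpha$ and $\sigma W_\theta^3 = a\, W_\alpha^3$ up to a nonzero scalar, as $\sigma$ acts by the corresponding map on the $sl_2$-triple), it suffices to treat $\alpha = \theta$.

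For $\alpha = \theta$, the key point is that $\widehat{P}_\theta$ sits inside $N(\g,k)$ as (a copy of) $N(sl_2, k)$, and its image $P_\theta$ in $K(\g,k) = N(\g,k)/\tI$ is the quotient by $\tI \cap \widehat{P}_\theta$. By Lemma \ref{l1}, the vector $x_{-\theta}(0)^{k+1}x_\theta(-1)^{k+1}\1$ lies in $\tI$; moreover this vector lies in the subalgebra generated by the $\g^\theta$-action, hence (after rewriting it in terms of $\omega_\theta$, $W_\theta^3$ and Heisenberg modes for $h_\theta$, and then projecting to the commutant of the $h_\theta$-Heisenberg algebra) its image lies in $\widehat{P}_\theta$. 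So $\tI \cap \widehat{P}_\theta$ contains the image of the generating vector of the maximal ideal of $N(sl_2,k)$. By \cite[Theorem 4.2]{DLY3}, that maximal ideal of $N(sl_2,k)$ is precisely the ideal generated by this vector, and the quotient is the simple vertex operator algebra $K(sl_2,k)$. Therefore $P_\theta$ is a quotient of $N(sl_2,k)$ by an ideal containing the maximal ideal; since $P_\theta \neq 0$ (it contains the nonzero Virasoro element $\omega_\theta$, which is nonzero in $K(\g,k)$ for $k \geq 2$ — for $k=1$ see Remark on $K(\g,k)=\C$), the ideal is exactly the maximal one, giving $P_\theta \cong K(sl_2,k)$.

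The main obstacle I anticipate is the bookkeeping needed to confirm that the image of $x_{-\theta}(0)^{k+1}x_\theta(-1)^{k+1}\1$ in $K(\g,k)$ genuinely lands inside the subalgebra $P_\theta$ generated by $\omega_\theta, W_\theta^3$ (equivalently, that the relevant vector of $N(sl_2,k)$ is carried into $\widehat{P}_\theta \subset N(\g,k)$ under the identification $\widehat{P}_\theta \cong N(sl_2,k)$), and that it is nonzero there so that one really gets the maximal ideal rather than all of $P_\theta$. This is exactly the content handled in \cite[Theorem 4.2]{DLY3} for the $sl_2$ case, so the cleanest route is to reduce everything to that theorem: since $V(k,0)$ restricted to $\widehat{\g^\theta}$ is a level-$k$ module and $\widehat{P}_\theta$ is by construction the $N(sl_2,k)$ living inside it, the statement of Proposition \ref{plast} follows by transporting \cite[Theorem 4.2]{DLY3} along this embedding together with Lemma \ref{l1} and the Weyl-group automorphism. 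I would therefore keep the proof short, citing \cite[Theorem 4.2]{DLY3} for the internal structure of $N(sl_2,k)$ and $K(sl_2,k)$ and doing only the reduction to $\alpha=\theta$ and the verification that $\tI$ restricts to the maximal ideal of $\widehat{P}_\theta$.
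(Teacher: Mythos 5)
Your proposal is correct and follows essentially the same route as the paper: reduce to the highest root via a Weyl group automorphism, use Lemma \ref{l1} to place $x_{-\theta}(0)^{k+1}x_{\theta}(-1)^{k+1}\1$ in $\tI$, and invoke \cite[Theorem 4.2]{DLY3} to identify the resulting quotient of $\widehat{P}_\alpha\cong N(sl_2,k)$ with the simple algebra $K(sl_2,k)$. The only cosmetic difference is that the paper transports the ideal-generating vector itself under $\sigma$ (obtaining $x_{-\alpha}(0)^{k+1}x_{\alpha}(-1)^{k+1}\1\in\tI$ directly) rather than transporting the subalgebra $\widehat{P}_\theta$, but the content is the same.
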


We next deal with short roots $\a\in \Delta_+.$ As we mentioned
already that $V(k,0)$ is a level $k_\a$-module for the affine
algebra $\widehat{\g^\a}.$ We need a different method to prove the
following which is a generalization of Proposition \ref{plast}.
\begin{prop}\label{plast1} Let $\alpha\in \Delta_+.$ Then the vertex operator subalgebra $P_{\alpha}$
of $K(\g,k)$ is a simple vertex operator algebra isomorphic to
the parafermion vertex operator algebra $K(sl_2,k_\a)$ associated
to $sl_2.$
\end{prop}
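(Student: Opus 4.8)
The plan is to reduce the short-root case to the structure theory of $K(sl_2,k_\a)$ by exhibiting a surjective vertex operator algebra homomorphism $\widehat{P}_\a \to K(sl_2,k_\a)$ whose kernel coincides with the image of $\widehat{P}_\a\cap\tI$. First I would recall that $V(k,0)$ is a level $k_\a$ module for $\widehat{\g^\a}$, with $\g^\a\cong sl_2$, so there is a canonical vertex operator algebra homomorphism $\phi_\a\colon V(k_\a,0)_{\widehat{sl_2}}\to V(k,0)$ sending the standard generators of $sl_2$ to $x_{\pm\a},h_\a$. By the very definitions \eqref{eq:w3}--\eqref{eq:W3} of $\omega_\a$ and $W_\a^3$ (which are built from $x_{\pm\a},h_\a$ exactly as in the $sl_2$ case with $k$ replaced by $k_\a$), $\phi_\a$ carries the subalgebra $N(sl_2,k_\a)\subset V(k_\a,0)$ isomorphically onto $\widehat{P}_\a$; this is the content of the identification $\widehat{P}_\a\cong N(sl_2,k_\a)$ already recorded in the excerpt via \cite[Theorem 3.1]{DLY3}.

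Next I would descend to $K(\g,k)=N(\g,k)/\tI$. Composing $\phi_\a$ with the quotient map gives a homomorphism $\psi_\a\colon N(sl_2,k_\a)\to K(\g,k)$ whose image is exactly $P_\a$. It remains to identify $\ker\psi_\a$ with the maximal ideal $\tilde\I_{sl_2}$ of $N(sl_2,k_\a)$, which by \cite[Theorem 4.2]{DLY3} is generated by $x_{-\a}(0)^{k_\a+1}x_{\a}(-1)^{k_\a+1}\1$ (with $k_\a$ in place of $k$). The inclusion $\psi_\a(\tilde\I_{sl_2})=0$ will follow once I show $x_{-\a}(0)^{k_\a+1}x_{\a}(-1)^{k_\a+1}\1\in\tI$; this is the analogue of Lemma \ref{l1} for the short root $\a$, and I would prove it the same way: $x_\a(-1)^{k_\a+1}\1$ lies in the maximal ideal $\J$ of $V(k,0)$ because $V(k,0)$ restricted to $\widehat{\g^\a}$ has level $k_\a$ and is a quotient of $V(k_\a,0)_{\widehat{sl_2}}$, so $x_\a(-1)^{k_\a+1}\1$ is annihilated in $L(k,0)$; applying $x_{-\a}(0)^{k_\a+1}$ and checking (via \cite[Theorem 3.2]{DLY2} applied to $\g^\a$, together with orthogonality of $h_\a$ against $(\C h_\a)^\perp$) that the resulting vector is a highest weight vector for $\wh$ places it in $\tI$. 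For the reverse inclusion $\ker\psi_\a\subseteq\tilde\I_{sl_2}$: since $N(sl_2,k_\a)/\tilde\I_{sl_2}\cong K(sl_2,k_\a)$ is simple, it suffices to show $\psi_\a$ is not identically zero, i.e. that $P_\a\neq 0$; this holds because $\omega_\a\neq 0$ in $K(\g,k)$ (for $k\ge 2$; the case $k=1$ is trivial by Remark as $K(\g,1)=\C$), which one sees by computing its weight-two coefficient or by noting that $\omega_\a$ has nonzero image under the conformal vector pairing. Hence $\ker\psi_\a=\tilde\I_{sl_2}$ and $P_\a\cong N(sl_2,k_\a)/\tilde\I_{sl_2}=K(sl_2,k_\a)$, which is simple.

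The main obstacle I anticipate is the short-root analogue of Lemma \ref{l1}, namely proving $x_{-\a}(0)^{k_\a+1}x_{\a}(-1)^{k_\a+1}\1\in\tI$ directly, since here we cannot simply conjugate by a Weyl group element (the Weyl group does not send $\theta$ to a short root). The key point to get right is that $\J$, the maximal ideal of $V(k,0)$, contains $x_\a(-1)^{k_\a+1}\1$ for every root $\a$ — long or short — which follows from the fact that the $\widehat{\g^\a}$-submodule generated by the highest weight vector $\1$ in $L(k,0)$ is integrable of level $k_\a$, hence $x_\a(-1)^{k_\a+1}\1=0$ in $L(k,0)$ by Kac's theorem on integrable modules. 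Once this is in hand, everything else is a routine transfer of the $sl_2$ results of \cite{DLY2,DLY3} through the embedding $\widehat{\g^\a}\hookrightarrow\wh$, exactly parallel to the argument given for Proposition \ref{plast}.
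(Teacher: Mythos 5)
Your proposal is correct and follows essentially the same route as the paper: the crux in both is that the $\widehat{\g^\a}$-submodule of $L(k,0)$ generated by the vacuum is the integrable module $L(k_\a,0)$, so $x_{\a}(-1)^{k_\a+1}\1\in \J$ and hence $x_{-\a}(0)^{k_\a+1}x_{\a}(-1)^{k_\a+1}\1\in\tI$, after which \cite[Theorem 4.2]{DLY3} identifies $P_\a$ with $K(sl_2,k_\a)$. Your extra care about nonvanishing of $\psi_\a$ is fine but can be simplified to the observation that $\1$ has nonzero image, so the kernel is a proper ideal of $N(sl_2,k_\a)$ containing, hence equal to, its maximal ideal.
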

\begin{proof} As in the proof of Proposition \ref{plast} we only
need to prove that
$$x_{-\a}(0)^{k_\a+1}x_{\a}(-1)^{k_\a+1}\1\in \tI.$$
Clearly, $L(k,0)$ is an integrable module for $\widehat{\g^\a}$ as
$x_{\a}(-1)$ is locally nilpotent on $L(k,0).$  In particular, the
vertex operator subalgebra $U$ of $L(k,0)$ generated by $\g^\a$ is
an integrable highest weight module. That is, $U$ is isomorphic to
$L(k_\a,0)$ associated to the affine algebra $\widehat{\g^\a}.$ As
a result, we have $x_{\a}(-1)^{k_\a+1}\1\in \J.$ It follows then
immediately that $x_{-\a}(0)^{k_\a+1}x_{\a}(-1)^{k_\a+1}\1\in
\tI,$ as desired.
\end{proof}

\begin{rmk} We expect from Proposition \ref{plast1} that the role of
$K(sl_2,k_\a)$ played in the theory of parafermion vertex operator
algebra is similar to the role of $sl_2$ played in the theory of
Kac-Moody Lie algebras. So a study of structural and
representation theory for $K(sl_2,k_\a)$ becomes  extremely
important for general parafermion vertex operator algebras.
\end{rmk}

\end{document}